\newtheorem{Th}{Theorem}
\newtheorem{Lemma}[Th]{Lemma}
\title{Solving the Tower of Hanoi with Random Moves} 
\author{
Max A. Alekseyev\thanks{Corresponding author. Email: {\tt maxal@gwu.edu}}~\thanks{Department of Mathematics, George Washington University.}
\quad and\quad 
Toby Berger\thanks{Department of Electrical and Computer Engineering, University of Virginia.} 
}
\date{}
\begin{document} 
\maketitle\thispagestyle{empty}

\section{Introduction}

The Tower of Hanoi puzzle consists of $n$ disks of distinct sizes distributed across $3$ pegs.
We will refer to a particular distribution of the disks across the pegs as a \emph{state} and call it \emph{valid}
if on each peg disks form a pile with the disk sizes decreasing from bottom up.
Since each disk can reside at one of $3$ pegs, while the order of disks on each peg is uniquely defined by their sizes, the total number of valid states is $3^n$.
At a single move it is allowed to transfer a disk from the top of one peg to the top of another peg if this results in a valid state.
In the classic formulation of the Tower of Hanoi puzzle, all disk initially are located on the first peg and it is required to transfer them all 
to the third peg with the smallest number of moves, which is known to be $2^n-1$.

French mathematician Edouard Lucas invented the Tower of Hanoi puzzle in 1883~\cite{Lucas1893}.  
Apparently, he simultaneously created the following legend~\cite{Ripley}:
\begin{quote}
``Buddhist monks somewhere in Asia are moving 64 heavy gold rings from peg 1 to peg 3.  
When they finish, the world will come to an end!"
\end{quote}
That Hanoi is located in what was then French Indo-China perhaps explains why a Frenchman saw fit to include Hanoi in the name of his puzzle.  
However, the legend never placed the monks and their tower explicitly in Hanoi or its immediate environs.  
Lucas' Tower of Hanoi puzzle became an international sensation (think Loyd's 15 puzzle and Rubik's cube); the legend was used to bolster sales.  
Still a popular and beloved toy, the Tower of Hanoi now also can be accessed over the Internet as a computer applet.

In the current work, we study solution of the Tower of Hanoi puzzle and some of its variants with random moves, where each move is chosen uniformly from 
the set of the valid moves in the current state. We prove the exact formulae for the expected number of random moves to solve the puzzles
and further present an alternative proof for one of the formulae that couples a theorem about expected commute times of random walks
on graphs with the delta-to-wye transformation used in the analysis of three-phase AC systems for electrical power distribution.

\section{Puzzle Variations and Preliminary Results}

If the valid states of the Tower of Hanoi represented as nodes of a graph and every two states that are one move away from each other are connected with an edge, 
the resulting graph is known as Sierpinski gasket (Fig.~\ref{fig:Sierp123}). In other words, Sierpinski gasket represents the \emph{state transition diagram} of the the Tower of Hanoi.
Namely, the Tower of Hanoi with $n=1$ disk corresponds to a graph with three nodes $A$, $B$, and $C$ and 
three edges $\overline {\rm AB}, \overline {\rm BC}$, and $\overline {\rm AC}$ (Fig.~\ref{fig:Sierp123}a).
The nodes $A$, $B$ and $C$ correspond, respectively, to the three possible states: ``$D_1$ is on the first peg", ``$D_1$ is on the second peg", 
and ``$D_1$ is on the third peg". The presence of edge $\overline{\rm AC}$ represents the fact that it is possible to reach $C$ from $A$ or $A$ from $C$ in one move; 
the other two edges have analogous interpretations.

\begin{figure}[!t]
\begin{tabular}{l}
\begin{tabular}{ll}
a) & b)\\
\begin{tabular}{c}
\includegraphics[width=.3\textwidth]{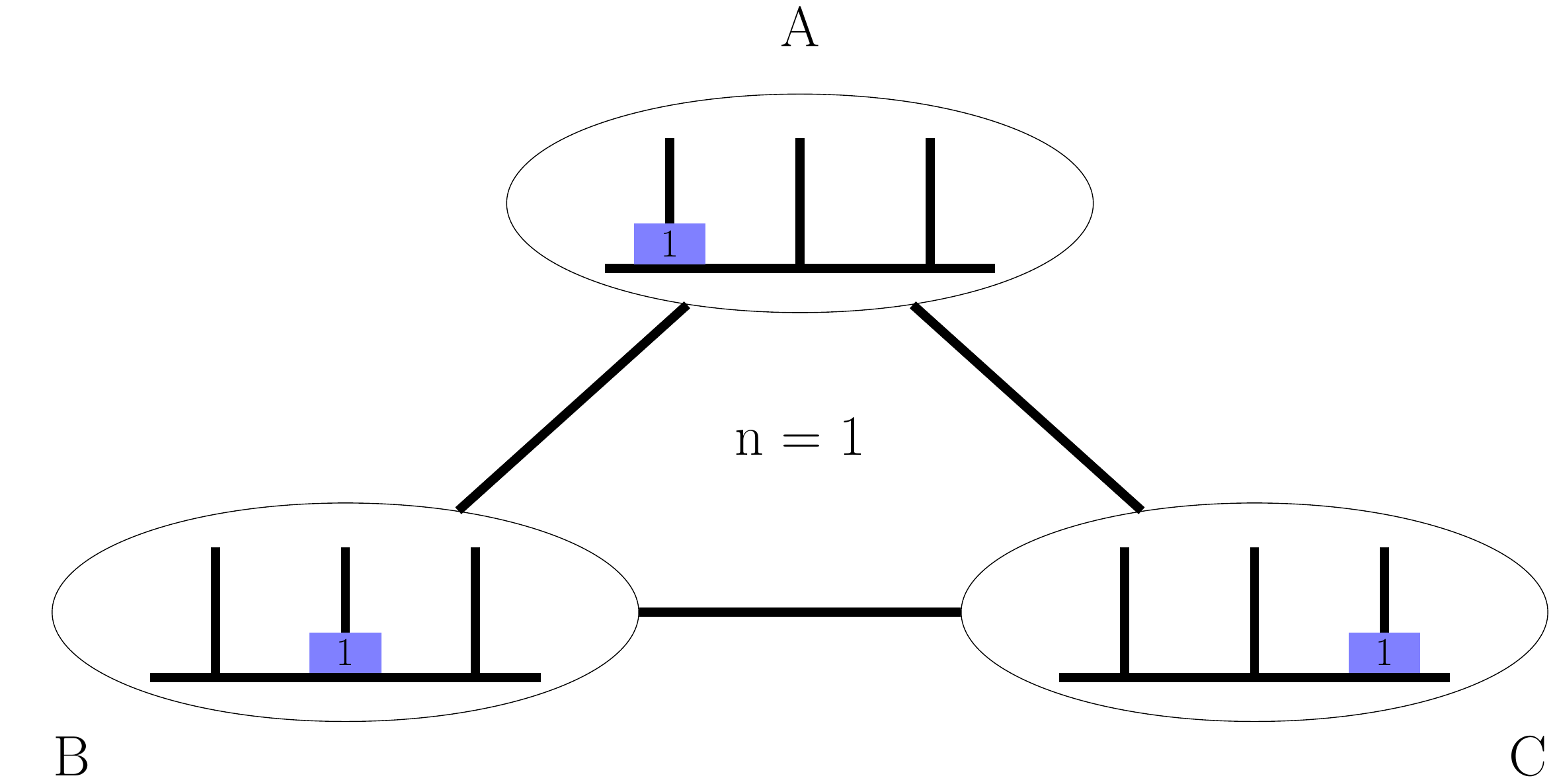} 
\end{tabular}
&
\begin{tabular}{c}
\includegraphics[width=.65\textwidth]{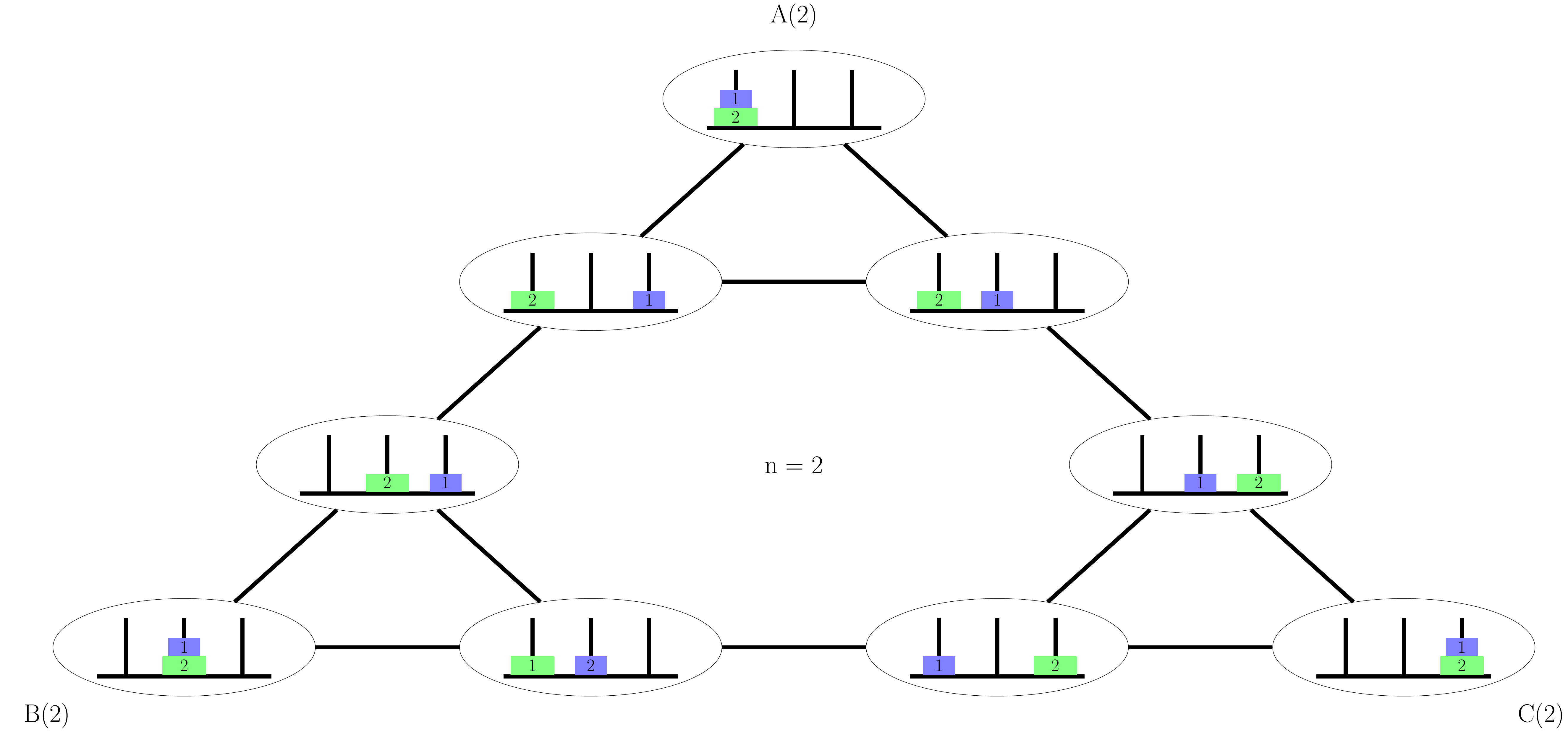} 
\end{tabular}
\end{tabular}
\\
c)\\
\begin{tabular}{c}
\includegraphics[width=0.95\textwidth]{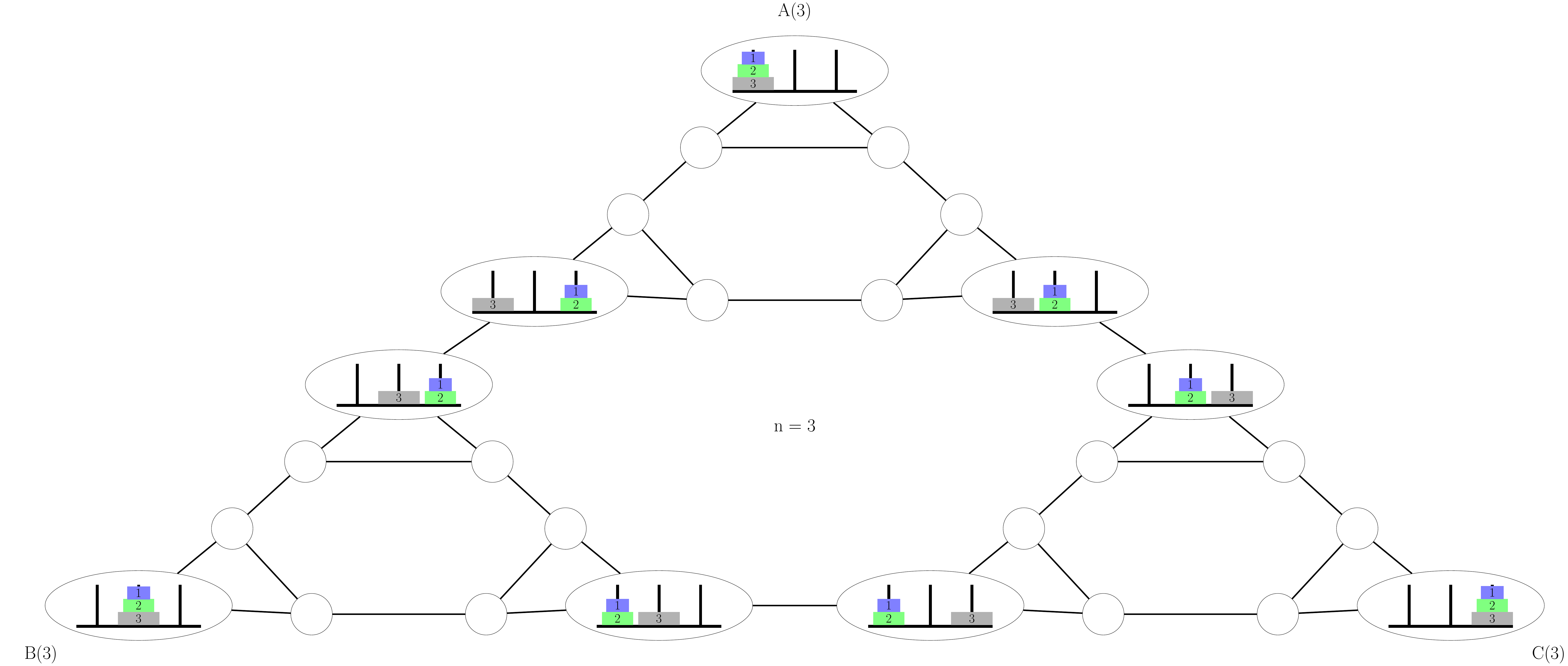} 
\end{tabular}
\end{tabular}
\caption{Sierpinski gaskets corresponding to \textbf{a)} the state transition diagram for the 1-disk Tower of Hanoi;
\textbf{b)} the state transition diagram for the 2-disk Tower of Hanoi, 
which is composed of three replicas of the 1-disk diagram with an added disk 2; 
and \textbf{c)} the state transition diagram for the 3-disk Tower of Hanoi, 
which is composed of three replicas of the 2-disk diagram with an added disk 3 (only corner states in each replica are labeled).}
\label{fig:Sierp123}
\end{figure}

The graph for the state transitions of the Tower of Hanoi with $n=2$ disks is obtained by arranging three replicas of the graph for $n=1$ 
with an added disk $D_2$ at a fixed peg in each replica, 
and then connecting each of the resulting three pairs of nearest neighbor nodes by bridging links (Fig.~\ref{fig:Sierp123}b).
This leaves only three corner nodes which we label $A(2)$, $B(2)$, and $C(2)$ such that
$A(2)$ corresponds to both disks being on the first peg, $B(2)$ corresponds to both disks being on the second peg,
and $C(2)$ corresponds to both disks being on the third peg. 

Similarly, the graph for the state transitions of the Tower of Hanoi with $n=3$ disks is obtained by arranging three replicas
of that for $n=2$ in the same way that was done to get the $n=2$ graph from the $n=1$ graph (Fig.~\ref{fig:Sierp123}c). 
In general, for any positive integer $k$, the state transition diagram for $n=k+1$ is obtained from that for $n=k$ 
by another iteration of this procedure employing three replicas and three bridges. 

The classic Tower of Hanoi puzzle corresponds to finding the shortest path between two corner nodes in the corresponding Sierpinski gasket.
We consider the following variants of the Tower of Hanoi puzzle with $n$ disks solved with random moves (which correspond to random walks in the Sierpinski gasket):

\begin{description}
\item[$r\to a$:]
The starting state is \emph{random} (chosen uniformly from the set of all $3^n$ states).
The final state is with all disks on the same (\emph{any}) peg.

\item[$1\to 3$:]
The starting state is with all disks on the \emph{first} peg. The final state is with all disks on the \emph{third} peg.

\item[$1\to a$:]
The starting state is with all disks on the \emph{first} peg. The final state is with all disks on the same (\emph{any}) peg.
At least one move is required.

\item[$\nicefrac12\to a$:]
The starting state is with the largest disk on the \emph{second} peg and the other disks on the \emph{first} peg.
The final state is with all disks on the same (\emph{any}) peg.

\item[$r\to 1$:]
The starting state is \emph{random} (chosen uniformly from the set of all $3^n$ states).
The final state is with all disks on the first peg.
\end{description}

Let $E_X(n)$ the expected number of random moves required to solve Puzzle $X$ with $n$ disks. 
The puzzles described above are representative for classes of similar puzzles obtained by renaming pegs. 
In particular, we can easily get the following identities:
$$E_{1\to 3}(n) = E_{1\to 2}(n) = E_{2\to 3}(n) = E_{3\to 1}(n) = E_{2\to 1}(n) = E_{3\to 2}(n),$$
$$E_{1\to a}(n) = E_{2\to a}(n) = E_{3\to a}(n),$$
$$E_{\nicefrac12\to a}(n) = E_{\nicefrac13\to a}(n) = E_{\nicefrac23\to a}(n) = E_{\nicefrac21\to a}(n) = E_{\nicefrac31\to a}(n) = E_{\nicefrac32\to a}(n),$$
$$E_{r\to 1}(n) = E_{r\to 2}(n) = E_{r\to 3}(n).$$

Puzzle~$r\to a$ was posed by David G. Poole who submitted values $E_{r\to a}(n)$ for $n$ up to $5$ as sequence \texttt{A007798} to 
the Online Encyclopedia of Integer Sequences (OEIS)~\cite{OEIS}.
Later Henry Bottomley conjectured the following formula for $E_{r\to a}(n)$:
\begin{equation}\label{FN1}
E_{r\to a}(n) = \frac{5^n-2\cdot 3^n + 1}{4}.
\end{equation}
Puzzle~$1\to 3$ was posed by the second author~\cite{Berger08} 
who also submitted numerators of $E_{1\to 3}(n)$ for $n$ up to 4 as sequence \texttt{A134939} to the OEIS
but did not conjecture a general formula for $E_{1\to 3}(n)$.

Below we will prove formula \eqref{FN1} as well as the 
following formula for $E_{1\to 3}(n)$:
\begin{equation}\label{FN2}
E_{1\to 3}(n) = \frac{(3^n - 1)(5^n - 3^n)}{2\cdot 3^{n-1}},
\end{equation}
which was originally announced by the first author in 2008.
We will also prove the following formulae for other puzzles:
\begin{equation}\label{FN3}
E_{1\to a}(n) = \frac{3^n - 1}{2},
\end{equation}
\begin{equation}\label{FN4}
E_{\nicefrac12\to a}(n) = \frac{3}{2}(5^{n-1} - 3^{n-1}),
\end{equation}
and
\begin{equation}\label{FN5}
E_{r\to 1}(n) = \frac{5^{n+1}-2\cdot 3^{n+1} + 5}{4} - \left(\frac{5}{3}\right)^n = \frac{(3^n-1)(5^{n+1}-2\cdot 3^{n+1})+5^n-3^n}{4\cdot 3^n}.
\end{equation}
We summarize these formulae along with references to the OEIS in Table~\ref{tab:EX}.

\begin{table}
\begin{center}
\begin{footnotesize}
\begin{tabular}{l||c|l|l}
\hline
Puzzle $X$ & Formula for $E_X(n)$ & Initial values ($n=1,2,\dots$) & Sequences in the OEIS\\
\hline\hline
$r\to a$ & $\frac{5^n-2\cdot 3^n + 1}{4}$ & $0, 2, 18, 116, 660, \ldots$ 
& $\texttt{A007798}(n)$ \\
\hline
$1\to 3$ & $\frac{(3^n - 1)(5^n - 3^n)}{2}\,/\,3^{n-1}$ & $2, \nicefrac{64}{3}, \nicefrac{1274}{9}, \nicefrac{21760}{27}, \ldots$
& $\texttt{A134939}(n)\,/\,\texttt{A000244}(n-1)$ \\
\hline
$1\to a$ & $\frac{3^n - 1}{2}$ & $1, 4, 13, 40, 121, 364, \ldots$
& $\texttt{A003462}(n)$ \\
\hline
$\nicefrac12\to a$  & $\frac{3\cdot (5^{n-1} - 3^{n-1})}{2}$ & $0, 3, 24, 147, 816, 4323, \ldots$
& $\texttt{A226511}(n-1)$ \\
\hline
$r\to 1$  & $\frac{(3^n-1)(5^{n+1}-2\cdot 3^{n+1})+5^n-3^n}{4}\,/\,3^n$ & $\nicefrac{4}{3}, \nicefrac{146}{9}, \nicefrac{3034}{27}, \nicefrac{52916}{81}, \ldots$
& $\texttt{A246961}(n)\,/\,\texttt{A000244}(n)$ \\
\hline
\end{tabular}
\caption{Variations of the Tower of Hanoi puzzle with $n$ disks, the expected numbers of random moves required to solve them, and references to the corresponding sequences in the OEIS~\cite{OEIS}.}
\label{tab:EX}
\end{footnotesize}
\end{center}
\end{table}

\section{Lemmas and Proofs}

Without loss of generality assume that the $n$-disk Tower of Hanoi has disks of sizes $1,2,\dots,n$.
We denote the disk of size $k$ by $D_k$ so that $D_1$ and $D_n$ refer to the smallest and largest disks, respectively.
Similarly, we let $D_k^m$ ($k\geq m$) be the set of all disks of sizes from $m$ to $k$ inclusively.

In the solution of Puzzle $1\to a$ with random moves, let
$p_1(n)$ and $p_2(n)$ denote the probability that a final state 
has all disks on the first and second peg, respectively.
From the symmetry, it is clear that the probability 
that a final state has all disks on the third peg is also $p_2(n)$,
so $p_1(n)+2 p_2(n)=1$.

Similarly, in solution to Puzzle~$\nicefrac12\to a$ with random moves,
let $q_1(n),\ q_2(n),$ and $q_3(n)$ denote the probability that 
a final state has all disks on the first, second, and third peg, respectively.

The relationships of Puzzles $1\to a$ and $\nicefrac12\to a$ to Puzzles $r\to a$ and $1\to 3$
are given by Lemmas~\ref{ThP1} and \ref{ThP2} below.

\begin{Lemma}\label{ThP1}
$$E_{r\to a}(n) = E_{r\to a}(n-1) + \frac{2}{3} E_{\nicefrac12\to a}(n).$$
\end{Lemma}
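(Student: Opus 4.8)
The plan is to decompose the $r\to a$ random walk on the $n$-disk gasket according to the position of the largest disk $D_n$, exploiting the fact that $D_n$ can move only from a state in which the remaining disks $D_{n-1}^1$ are all stacked on a single peg. First I would use the peg-permutation symmetry of the problem: permuting the three pegs is an automorphism of the gasket that fixes the target set $\{A(n),B(n),C(n)\}$ and preserves the uniform starting distribution, so the expected hitting time averaged over the $3^{n-1}$ starting states with $D_n$ on a fixed peg is the same for each peg and hence equals $E_{r\to a}(n)$. It therefore suffices to analyze the walk started from the uniform distribution over the sub-gasket in which $D_n$ sits on peg~$1$.

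Next I would introduce the stopping time $T$ equal to the first moment at which the disks $D_{n-1}^1$ are all on one common peg. Until time $T$ the disk $D_n$ cannot move, so the walk stays inside the ``$D_n$ on peg~$1$'' sub-gasket, and its transition probabilities at every intermediate (non-corner) state coincide exactly with those of the ordinary $(n-1)$-disk walk: in both cases $D_n$ contributes no legal move and $D_{n-1}^1$ admit the same three moves. Consequently the walk up to time $T$ is distributed as the $(n-1)$-disk $r\to a$ walk from a uniform start, giving $E[T]=E_{r\to a}(n-1)$; moreover, by the same peg symmetry applied to the $(n-1)$-disk walk, the common peg $j$ reached at time $T$ is uniform on $\{1,2,3\}$.

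I would then finish with the strong Markov property at $T$. The state reached at time $T$ has $D_n$ on peg~$1$ and $D_{n-1}^1$ all on peg~$j$. If $j=1$ this is the corner $A(n)$, a target, contributing nothing further, and this occurs with probability $\tfrac13$. If $j\in\{2,3\}$, which occurs with probability $\tfrac23$, the state has the largest disk on one peg and all smaller disks on a different peg, i.e.\ (up to renaming pegs) precisely a $\nicefrac12\to a$ starting state, so the expected remaining time to a target is $E_{\nicefrac12\to a}(n)$. Averaging over $j$ and adding $E[T]$ yields
$$E_{r\to a}(n)=E_{r\to a}(n-1)+\frac13\cdot 0+\frac23\,E_{\nicefrac12\to a}(n),$$
which is the claim.

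The step I expect to require the most care is the identification of the pre-$T$ walk with the genuine $(n-1)$-disk walk: one must verify that at every state visited before $T$ the set of legal moves (hence the uniform move distribution) is exactly that of the $(n-1)$-disk puzzle, which hinges on the observation that $D_n$ becomes mobile only at the very corner states where $T$ is attained. Confirming the degree and transition match at the boundary, and invoking the strong Markov property so that the two phases neither double-count nor omit the bridging configuration, are the points that need to be argued cleanly rather than merely asserted.
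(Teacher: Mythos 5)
Your proposal is correct and takes essentially the same route as the paper's proof: both decompose the walk at the first time the disks $D_{n-1}^1$ unite on a single peg, identify that phase with the $(n-1)$-disk $r\to a$ walk (expected length $E_{r\to a}(n-1)$, with the terminal peg uniform by symmetry of the uniform start), and then branch on whether that peg is the one holding $D_n$ (probability $\nicefrac13$, puzzle solved) or not (probability $\nicefrac23$, yielding an instance of Puzzle~$\nicefrac12\to a$). Your write-up simply makes explicit the stopping-time, strong-Markov, and transition-matching bookkeeping that the paper leaves implicit.
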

\begin{proof}
It is easy to see that $D_n$ cannot move unless $D_{n-1}^1$ are on the same peg.
In Puzzle~$r\to a$, the expected number of random moves required to arrive at such state is $E_{r\to a}(n-1)$.
Moreover, since the starting state is uniformly chosen, in the final state $D_{n-1}^1$
will be on any peg with the equal probability $\nicefrac{1}{3}$. In particular, with the probability $\nicefrac{1}{3}$
it is the same peg where $D_n$ resides and the puzzle is solved. Otherwise, with the probability $\nicefrac{2}{3}$ the disks $D_{n-1}^1$ and $D_n$
are on distinct pegs and thus we can view the remaining moves as solving an instance of Puzzle~$\nicefrac12\to a$. Therefore, 
$$E_{r\to a}(n) = \frac{1}{3}E_{r\to a}(n-1) + \frac{2}{3} (E_{r\to a}(n-1)+E_{\nicefrac12\to a}(n)) = E_{r\to a}(n-1) + \frac{2}{3} E_{\nicefrac12\to a}(n).$$
\end{proof}

\begin{Lemma}\label{ThP2}
$$E_{1\to 3}(n) = \frac{E_{1\to a}(n)}{p_2(n)}.$$
\end{Lemma}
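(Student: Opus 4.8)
The plan is to read both sides as expected hitting times for the random walk on the Sierpinski gasket $S_n$ and to connect them by a renewal (excursion) decomposition. Denote by $A$, $B$, $C$ the three corner states, corresponding to all disks being on pegs $1$, $2$, $3$ respectively. Then $E_{1\to 3}(n)$ is the expected time to first reach $C$ starting from $A$, while $E_{1\to a}(n)$ is the expected length of a single \emph{excursion} from $A$, namely a walk that makes at least one move and halts the instant it lands on any corner. By the very definitions of $p_1(n)$ and $p_2(n)$, such an excursion from $A$ terminates at $A$, $B$, $C$ with probabilities $p_1(n)$, $p_2(n)$, $p_2(n)$.

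First I would fix $h_A := E_{1\to 3}(n)$ and introduce the auxiliary quantity $h_B$, the expected time to first reach $C$ starting from $B$. Conditioning on the outcome of the first excursion and invoking the strong Markov property at the first corner visited gives
\[
h_A = E_{1\to a}(n) + p_1(n)\,h_A + p_2(n)\,h_B,
\]
since after an excursion of (unconditional) expected length $E_{1\to a}(n)$ the walk restarts from $A$, $B$, or $C$, and from $C$ no further time is needed. The peg-renaming symmetry that swaps pegs $1$ and $2$ fixes $C$ and interchanges $A$ and $B$; applying it shows both that the excursion statistics from $B$ are identical to those from $A$ (yielding the companion equation $h_B = E_{1\to a}(n) + p_1(n)\,h_B + p_2(n)\,h_A$) and that $h_A = h_B$.

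Substituting $h_A = h_B$ into the first equation and using $p_1(n) + 2 p_2(n) = 1$, so that $1 - p_1(n) - p_2(n) = p_2(n)$, I would solve to obtain $h_A\,p_2(n) = E_{1\to a}(n)$, i.e.\ the claimed identity. The step needing the most care is the excursion decomposition itself: one must account for the length of a single excursion through its \emph{unconditional} expectation $E_{1\to a}(n)$ and add the expected remaining time via the tower property, rather than conditioning the excursion length on its endpoint (which would be harder to evaluate). One should also record that $p_2(n) > 0$ for all $n \ge 1$, so that the division defining $E_{1\to 3}(n) = E_{1\to a}(n)/p_2(n)$ is legitimate.
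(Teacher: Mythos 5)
Your proof is correct and takes essentially the same route as the paper: a renewal decomposition at the first moment all disks occupy a common peg, whose unconditional expected length is $E_{1\to a}(n)$, followed by conditioning on which corner was reached and solving the resulting linear equation using $1-p_1(n)-p_2(n)=p_2(n)$. The only difference is presentational — you introduce $h_B$ and justify $h_A=h_B$ via the peg-swapping symmetry explicitly, whereas the paper compresses this step into the observation that landing on peg 1 or peg 2 yields ``a new instance of Puzzle $1\to 3$'' (relying on its earlier symmetry identities such as $E_{1\to 3}(n)=E_{2\to 3}(n)$).
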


\begin{proof} 
In the course of solving Puzzle~$1\to 3$ with random moves, all disks will first appear on the same peg after $E_{1\to a}(n)$ moves on average.
This peg will be the first peg with the probability $p_1(n)$, the second peg with the probability $p_2(n)$, or
the third peg also with the probability $p_2(n)$. In the last case Puzzle~$1\to 3$ is solved,
while in the first two cases we basically obtain a new instance of Puzzle~$1\to 3$.
Therefore, $E_{1\to 3}(n) = E_{1\to a}(n) + (p_1(n)+p_2(n)) E_{1\to 3}(n)$, implying that $E_{1\to 3}(n) = \frac{E_{1\to a}(n)}{p_2(n)}.$
\end{proof}

Lemmas~\ref{ThP1} and \ref{ThP2} imply that explicit formulae for $E_{r\to a}(n)$ and $E_{1\to 3}(n)$ 
easily follow from those for $E_{1\to a}(n)$, $E_{\nicefrac12\to a}(n)$, and of $p_2(n)$.

\begin{Lemma} The following equalities hold:
\begin{itemize}
\item[(i)] $E_{1\to a}(n) = E_{1\to a}(n-1) + 2 p_2(n-1) E_{\nicefrac12\to a}(n)$
\item[(ii)] $p_1(n) = p_1(n-1) + 2 p_2(n-1) q_2(n)$
\item[(iii)] $p_2(n) = p_2(n-1) q_1(n) + p_2(n-1) q_3(n)$
\item[(iv)] $E_{\nicefrac12\to a}(n) = \tfrac{1}{2} + E_{1\to a}(n-1) + (p_1(n-1)+p_2(n-1)) E_{\nicefrac12\to a}(n)$
\item[(v)] $q_1(n) = p_1(n-1) q_1(n) + p_2(n-1) q_3(n)$
\item[(vi)] $q_2(n) = \frac{3}{4}( p_2(n-1) + (p_1(n-1)+p_2(n-1)) q_2(n) ) + \frac{1}{4}( p_1(n-1) q_3(n) + p_2(n-1) q_1(n) )$
\item[(vii)] $q_3(n) = \frac{3}{4}( p_1(n-1) q_3(n) + p_2(n-1) q_1(n) ) + \frac{1}{4} ( (p_1(n-1) + p_2(n-1)) q_2(n) + p_2(n-1) )$
\end{itemize}
\end{Lemma}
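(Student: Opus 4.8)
The plan is to exploit the self-similar structure of the Sierpinski gasket described in Section~2, together with first-step (renewal) analysis and the peg-relabeling symmetries that already underlie the identities for $E_X$. The backbone is the observation that the $n$-disk state graph consists of three copies $T_1,T_2,T_3$ of the $(n-1)$-disk graph, indexed by the peg holding $D_n$, joined by three \emph{bridges}; each bridge is the single edge that moves $D_n$ and is therefore incident only to a \emph{gathered corner} of a copy, i.e.\ a state in which $D_{n-1}^1$ all sit on one peg. I will label the nine gathered corners $a_j,b_j,c_j$ ($D_n$ on peg $1,2,3$ respectively, $D_{n-1}^1$ on peg $j$); the three solved targets are $a_1,b_2,c_3$, and the bridges join $a_2\!-\!c_2$, $a_3\!-\!b_3$, and $b_1\!-\!c_1$.

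The key lemma underlying every identity is the \emph{sub-puzzle principle}: started at a gathered corner of a copy $T_i$, the walk stays inside $T_i$ until $D_{n-1}^1$ first regather, because $D_n$ can move only from a gathered corner and reaching any gathered corner terminates this phase. Hence this phase is exactly an $(n-1)$-disk instance of Puzzle~$1\to a$: it lasts $E_{1\to a}(n-1)$ moves on average and regathers at the starting corner with probability $p_1(n-1)$ and at each of the other two corners with probability $p_2(n-1)$. Identity~(i) is then immediate: started at $A(n)=a_1$, the first regathering lands on $a_1$ (solved) with probability $p_1(n-1)$ or on $a_2$ or $a_3$ with probability $p_2(n-1)$ each; from $a_2$ or $a_3$ the largest disk and the rest sit on distinct pegs, so after relabeling pegs the remaining walk is a fresh instance of Puzzle~$\nicefrac12\to a$, contributing $E_{\nicefrac12\to a}(n)$. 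Tracking instead the \emph{absorbing peg}, and matching it under the same relabelings to the outcome probabilities $q_1,q_2,q_3$, yields (ii) and (iii).

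For the $\nicefrac12\to a$ quantities I will perform a first-step analysis at the start $b_1$, whose three moves (each with probability $\tfrac13$) are two scatterings of $D_1$ inside $T_2$ and one bridge move to $c_1$. Conditioned on a scattering, the ensuing phase is the $(n-1)$-disk Puzzle~$1\to a$ just described, regathering at $b_1,b_2,b_3$; since $b_2$ is solved and, by the peg $1\leftrightarrow3$ symmetry $E_{b_3}=E_{b_1}$ while by the peg $2\leftrightarrow3$ symmetry $E_{c_1}=E_{b_1}$, the expected-time equation collapses to
\begin{equation*}
E_{\nicefrac12\to a}(n) = \tfrac13\bigl(1+E_{\nicefrac12\to a}(n)\bigr) + \tfrac23\bigl(E_{1\to a}(n-1) + (p_1(n-1)+p_2(n-1))E_{\nicefrac12\to a}(n)\bigr),
\end{equation*}
which rearranges to (iv). The same first-step analysis applied to the absorbing-peg probabilities, using the relabeled identities $P_1(c_1)=q_1,\ P_2(c_1)=q_3,\ P_3(c_1)=q_2$ and $P_1(b_3)=q_3,\ P_2(b_3)=q_2,\ P_3(b_3)=q_1$, produces the system for $q_1,q_2,q_3$; after clearing the factor $\tfrac13$ one obtains (v) directly, and (vi)--(vii) after substituting the symmetric expressions (their $\tfrac34,\tfrac14$ weights arise from folding the return-to-$b_1$ excursions and the bridge step together). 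I would finally check the whole system against $q_1+q_2+q_3=1$, which indeed forces $S\,p_2(n-1)=p_2(n-1)$ and hence $S=1$.

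The main obstacle is bookkeeping rather than any single hard estimate: one must (a)~verify rigorously that the interior phase never crosses a bridge, so that the sub-puzzle principle applies verbatim, and (b)~get every peg-relabeling correspondence exactly right when transporting $q_1,q_2,q_3$ from the canonical $\nicefrac12\to a$ configuration to the corners $a_2,a_3,b_3,c_1$. The $q_2,q_3$ equations (vi)--(vii) are the delicate ones, since there the two non-fixed pegs are exchanged by the relabelings and an off-by-one in the correspondence silently breaks normalization; running the $q_1+q_2+q_3=1$ check, and comparing with the tabulated values for small $n$, is the safeguard I would rely on, together with a separate treatment of the base case $n=1$.
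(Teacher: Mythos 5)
Your proposal is correct, and its backbone---the renewal structure in which $D_n$ is frozen while $D_{n-1}^1$ are scattered, so each gathering phase is exactly an $(n-1)$-disk instance of Puzzle~$1\to a$ with landing probabilities $p_1(n-1),p_2(n-1),p_2(n-1)$, followed by a relabeled fresh instance of Puzzle~$\nicefrac12\to a$---is the same as the paper's; for (i)--(iii) the two arguments essentially coincide. The genuine difference is in (iv)--(vii). The paper decomposes the $\nicefrac12\to a$ walk into stages, aggregating all consecutive moves of $D_n$ into a Stage~1 whose statistics are obtained by summing geometric series: expected length $\tfrac12$, and probabilities $\tfrac34$, $\tfrac14$ for where $D_n$ sits when $D_1$ first moves; these constants are then inserted directly into (iv), (vi), (vii). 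You instead make a one-step analysis at $b_1$ (bridge with probability $\tfrac13$, scattering with probability $\tfrac23$) and close the system with the peg-swap identities $E_{c_1}=E_{b_3}=E_{b_1}$ and $P_j(c_1),P_j(b_3)$ expressed through $q_1,q_2,q_3$. This yields (iv) and (v) after a one-line rearrangement, but for the last two identities it yields the intermediate symmetric system
\begin{equation*}
q_2 = \tfrac13 q_3 + \tfrac23\bigl(p_2(n-1) + (p_1(n-1)+p_2(n-1))\,q_2\bigr),
\qquad
q_3 = \tfrac13 q_2 + \tfrac23\bigl(p_1(n-1)\, q_3 + p_2(n-1)\, q_1\bigr),
\end{equation*}
rather than (vi)--(vii) themselves. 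I have checked that eliminating the cross terms (substitute the second equation into the first and vice versa) reproduces (vi) and (vii) exactly, the weights arising as $\tfrac{2/3}{8/9}=\tfrac34$ and $\tfrac{2/9}{8/9}=\tfrac14$; so the plan is sound, but note that you prove the stated identities only via this equivalent reformulation, and your normalization check $q_1+q_2+q_3=1$ does go through ($p_2(n-1)S=p_2(n-1)$, as you predicted). What each approach buys: yours avoids all series summations (the constants $\tfrac12,\tfrac34,\tfrac14$ come out of linear algebra and symmetry), at the price of heavier relabeling bookkeeping---which you rightly flag as the delicate point---while the paper's staging produces the stated coefficients directly.

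Two small cautions. First, your ``sub-puzzle principle'' as literally stated (``the walk stays inside $T_i$ until $D_{n-1}^1$ first regather'') is false at an unsolved gathered corner, where the very first move may cross a bridge; it is valid in the way you actually use it, namely from $a_1$ (where no bridge is incident) and, at $b_1$, after conditioning on the first move being a move of $D_1$---in which case the conditional phase is indeed distributed exactly as the $(n-1)$-disk Puzzle~$1\to a$. State that conditioning explicitly. Second, the identities should be read for $n\geq 2$ (e.g.\ (iv) fails at $n=1$), consistent with the paper anchoring its recurrences at the initial conditions for $n=2$; your planned ``separate treatment of the base case'' is the right fix.
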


\begin{proof} 
Consider Puzzle~$1\to a$. 
Note that $D_n$ cannot move unless $D_{n-1}^1$ are on the same peg.
Therefore, we can focus only on $D_{n-1}^1$ until they all come to the same peg, 
which will happen (on average) after $E_{1\to a}(n-1)$ moves. 
This will be the first peg (where $D_n$ is) with probability $p_1(n-1)$,  
in which case we have the final state with all disks on the first peg.
Otherwise, with the probability $1-p_1(n-1)=2 p_2(n-1)$, 
we have $D_n$ on the first peg and $D_{n-1}^1$ on 
a different peg (equally likely on the second or the third one), in which case the remaining moves can be considered as 
as solving an instance of Puzzle~$\nicefrac12\to a$.
This proves formula (i).

From the above it is also easy to see that in the final state all disks 
will be at the first peg with the probability
$p_1(n-1) + 2 p_2(n-1) q_2(n)$ and at the second peg or third peg
with the same probability $p_2(n-1) q_1(n) + p_2(n-1) q_3(n)$, which proves formulae (ii) and (iii).

Now, consider Puzzle~$\nicefrac12\to a$.
Moves in this puzzle can be split into two or three stages as follows.
In Stage 1 only $D_n$ is moving (between the second and third pegs),
Stage 2 starts with a move of $D_1$ (from the top of the first peg)
and ends when $D_{n-1}^1$ are on the same peg. 
If this is not the final state, the remaining moves are viewed as Stage 3. 
Let us analyze these stages.

It is easy to see that the expected number of moves in Stage 1 is
\(
\tfrac{2}{3} \cdot \left( \left(\tfrac{1}{3}\right)^0 \cdot 0 + 
\left(\tfrac{1}{3}\right)^1 \cdot 1 + \dots \right) = \tfrac{1}{2}
\)
and with the probability 
\(
\tfrac{2}{3} \cdot \left( \left(\tfrac{1}{3}\right)^0 + 
\left(\tfrac{1}{3}\right)^2 + \dots \right) = \tfrac{3}{4}
\)
it will end up at the same peg where it started, namely the second peg.
The probability for $D_n$ to end up at the third peg is therefore $1 - \tfrac{3}{4} = \tfrac{1}{4}$.
The expected number of moves in Stage 2 is simply $E_{1\to a}(n-1)$ and 
at the end $D_{n-1}^1$ are on the first peg with the probability $p_1(n-1)$ and on 
the second or third pegs with the equal probability $p_2(n-1)$.
Therefore, with the probability $p_2(n-1)$ we are at the final state
(no matter where $D_n$ is left after Stage 1) and with the probability
$1-p_2(n-1)=p_1(n-1)+p_2(n-1)$ we embark upon Stage 3, which can be viewed simply as a new 
instance of Puzzle~$\nicefrac12\to a$ with the expected number 
of moves $E_{\nicefrac12\to a}(n)$. The above analysis proves formulae (iv)-(vii).
\end{proof}

Formula~\eqref{FN3} for $E_{1\to a}(n)$ follows directly from (i) and (iv).
Namely, formula (iv) can be rewritten as $p_2(n-1) E_{\nicefrac12\to a}(n) = \tfrac{1}{2} + E_{1\to a}(n-1)$.
Substituting this into (i) results in the recurrent formula:
$$E_{1\to a}(n) = E_{1\to a}(n-1) + 2 p_2(n-1) E_{\nicefrac12\to a}(n) = 3 E_{1\to a}(n-1) + 1.$$
Together with $E_{1\to a}(1)=1$ this formula proves \eqref{FN3}, which in turn further implies
\begin{equation}\label{F12a}
E_{\nicefrac12\to a}(n) = \frac{\tfrac{1}{2} + E_{1\to a}(n-1)}{p_2(n-1)} = \frac{3^{n-1}}{2 p_2(n-1)}.
\end{equation}

Let us focus on the recurrent 
equations (ii), (iii), (v), (vi), (vii) and solve them
with respect to $p_1(n)$, $p_2(n)$, $q_1(n)$, $q_2(n)$, and $q_3(n)$.
Solving Puzzle~$r\to a$ and Puzzle~$1\to 3$ for $n=2$, we easily obtain the following initial conditions:
$$
p_1(2) = \nicefrac{5}{8},\quad p_2(2) = \nicefrac{3}{16},\quad 
q_1(2) = \nicefrac{1}{8},\quad q_2(2) = \nicefrac{5}{8},\quad q_3(2) = \nicefrac{1}{4}.
$$
Also solving Puzzle~$r\to a$ for $n=1$, we get $p_1(1) = 0$ and $p_2(1)=\nicefrac{1}{2}$.

From (v) we have
$p_2(n-1) q_3(n) = q_1(n) - p_1(n-1) q_1(n) = (1-p_1(n-1)) q_1(n) = 2 p_2(n-1) q_1(n)$.
Since $p_2(n-1)>0$ for all $n\geq 2$, we also have 
$q_3(n) = 2 q_1(n)$, $q_2(n) = 1 - q_1(n) - q_3(n) = 1 - 3 q_1(n)$,
and $(8 - 3 p_1(n-1)) q_1(n) = 1$ for all $n\geq 2$.

Using these relations, we simplify equation (ii) to
$$
\begin{array}{l}
p_1(n) = p_1(n-1) + 2 p_2(n-1) (1 - 3 q_1(n)) = 1 - 6 p_2(n-1) q_1(n)\\
= 1 - 3(1-p_1(n-1)) q_1(n) = 1 - 3 q_1(n) + 3 p_1(n-1) q_1(n)\\
= 1 - 3 q_1(n) + 8 q_1(n) - 1 = 5 q_1(n).
\end{array}
$$

Combining the above equations, we have $(8 - 15 q_1(n-1)) q_1(n) = 1$, that is
\begin{equation}\label{q1recurrent}
q_1(n) = \frac{1}{8 - 15q_1(n-1)}.
\end{equation}

\begin{Lemma} For all positive integers $n$,
\begin{equation}\label{q1explicit}
q_1(n) = \frac{5^{n-1} - 3^{n-1}}{5^n - 3^n}.
\end{equation}
\end{Lemma}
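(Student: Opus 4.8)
The plan is to establish \eqref{q1explicit} by a straightforward induction on $n$, using the recurrence \eqref{q1recurrent} as the engine; the explicit formula is essentially forced by substituting it into that recurrence. First I would handle the base case. For $n=1$ the puzzle $\nicefrac12\to a$ starts with the single (largest) disk already on the second peg, which is itself a valid final state, so $q_1(1)=q_3(1)=0$ and $q_2(1)=1$; this agrees with $\frac{5^{0}-3^{0}}{5^{1}-3^{1}}=0$. I would also note the consistency check $q_1(2)=\nicefrac18=\frac{5^{1}-3^{1}}{5^{2}-3^{2}}$. Because $p_1(1)=0=5\,q_1(1)$, the relation $p_1(n-1)=5\,q_1(n-1)$ that underlies \eqref{q1recurrent} already holds at $n=2$, so \eqref{q1recurrent} is valid for every $n\ge 2$ and the single base case $n=1$ is enough.

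For the inductive step, assume $q_1(n-1)=\dfrac{5^{n-2}-3^{n-2}}{5^{n-1}-3^{n-1}}$ and substitute into \eqref{q1recurrent}, clearing the inner fraction:
$$
q_1(n)=\frac{1}{\,8-15\,q_1(n-1)\,}
=\frac{5^{n-1}-3^{n-1}}{\,8\bigl(5^{n-1}-3^{n-1}\bigr)-15\bigl(5^{n-2}-3^{n-2}\bigr)\,}.
$$
The only computation is collapsing the denominator. Using $15\cdot 5^{n-2}=3\cdot 5^{n-1}$ and $15\cdot 3^{n-2}=5\cdot 3^{n-1}$, the denominator becomes $(8-3)\,5^{n-1}-(8-5)\,3^{n-1}=5\cdot 5^{n-1}-3\cdot 3^{n-1}=5^{n}-3^{n}$, which yields $q_1(n)=\frac{5^{n-1}-3^{n-1}}{5^{n}-3^{n}}$ and completes the induction.

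I do not expect a genuine obstacle here: the argument is routine once the recurrence \eqref{q1recurrent} is in hand, and the only point requiring a little care is the range of validity of that recurrence at its first step (resolved above by the observation $p_1(1)=5\,q_1(1)$). It is worth remarking why the answer has this shape: \eqref{q1recurrent} is a linear-fractional (M\"obius) map, and its associated matrix $\bigl(\begin{smallmatrix}0&1\\-15&8\end{smallmatrix}\bigr)$ has characteristic polynomial $\lambda^{2}-8\lambda+15=(\lambda-3)(\lambda-5)$. The eigenvalues $3$ and $5$ are precisely what produce the $3^{n}$ and $5^{n}$ appearing in \eqref{q1explicit}, so one could alternatively diagonalize and read off the closed form directly; I would nonetheless keep the induction, since it is the shortest route and dovetails with the recurrences already derived.
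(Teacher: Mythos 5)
Your proof is correct and takes essentially the same approach as the paper's: induction on $n$ with base case $q_1(1)=0$, substituting the inductive hypothesis into the recurrence \eqref{q1recurrent} and collapsing the denominator to $5^{n}-3^{n}$. Your additional care about the first valid application of \eqref{q1recurrent} (via $p_1(1)=5\,q_1(1)$) and the M\"obius-map eigenvalue remark are sound but supplementary to what the paper does.
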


\begin{proof} We prove formula for $q_1(n)$ by induction on $n$. 

For $n=1$, formula \eqref{q1explicit} trivially holds as $q_1(1) = 0$.
Now for integer $m\geq 1$, if formula \eqref{q1explicit} holds for $n=m$, then using \eqref{q1recurrent} we get
$$
q_1(m+1) = \frac{1}{8 - 15q_1(m)} = \frac{1}{8-15\frac{5^{m-1} - 3^{m-1}}{5^m - 3^m}} = \frac{5^m-3^m}{8(5^m-3^m) - 15(5^{m-1} - 3^{m-1})} 
= \frac{5^m-3^m}{5^{m+1} - 3^{m+1}}.
$$
Therefore, formula \eqref{q1explicit} holds for $n=m+1$, which completes the proof.
\end{proof}

Formula \eqref{q1explicit} further implies:
$$q_2(n) = 1 - 3 q_1(n) = \frac{2\cdot 5^{n-1}}{5^n - 3^n};$$
$$q_3(n) = 2 q_1(n) = \frac{2\cdot (5^{n-1} - 3^{n-1})}{5^n - 3^n};$$
$$p_1(n) = 5 q_1(n) = \frac{5^n - 5\cdot 3^{n-1}}{5^n - 3^n};$$
$$p_2(n) = \frac{1 - p_1(n)}{2} =  \frac{3^{n-1}}{5^n - 3^n}.$$
The last formula together with \eqref{F12a} proves formula \eqref{FN4}.

Now we are ready to prove formulae \eqref{FN1} and \eqref{FN2}.
Lemma~\ref{ThP1} together with $E_{r\to a}(0)=0$ implies
$$E_{r\to a}(n) = \sum_{k=1}^n \left( E_{r\to a}(k)-E_{r\to a}(k-1) \right) = \sum_{k=1}^n \left( 5^{k-1} - 3^{k-1} \right) = \frac{5^n - 2\cdot 3^n + 1}{4}.$$
Lemma~\ref{ThP2} implies
$$E_{1\to 3}(n) = \frac{E_{1\to a}(n)}{p_2(n)} = \frac{(3^n - 1)(5^n - 3^n)}{2\cdot 3^{n-1}}.$$

Finally, we derive formula \eqref{FN5}.
Solving Puzzle~$r\to 1$ can be viewed as first solving Puzzle~$r\to a$ and if it does not result in 
all disks on the first peg (that happens with the probability $\nicefrac{2}{3}$), continue solving it as Puzzle~$1\to 3$. 
Therefore, the expected number of moves in Puzzle~$r\to 1$ is:
$$E_{r\to 1}(n) = E_{r\to a}(n) + \frac{2}{3} E_{1\to 3}(n) = \frac{5^n - 2\cdot 3^n + 1}{4} + \frac{(3^n - 1)(5^n - 3^n)}{3^n} 
= \frac{5^{n+1}-2\cdot 3^{n+1} + 5}{4} - \left(\frac{5}{3}\right)^n.$$

\section{Analysis of Puzzle~$1\to 3$ via Networks of Electrical Resistors}

We now present an altogether different method for solving Puzzle~$1\to 3$ that relies on 
on its interpretation as a random walk between two corner nodes in the corresponding Sierpinski gasket and a result from
electrical circuit theory.
The corner nodes of the Sierpinski gasket for the Tower of Hanoi with $n$ disks correspond to 
the states with all disks on the first, second, and third peg, which we label $A(n)$, $B(n)$, $C(n)$, respectively.
In other words,
$A(n)=(D_n^1, \emptyset, \emptyset)$, $B(n)=(\emptyset,D_n^1,\emptyset)$, and $C(n)=(\emptyset,\emptyset,D_n^1)$, where $\emptyset$ is the empty set. 

A \emph{random walk} on an undirected graph consists of a sequence of steps from one end of an edge to the other end of that edge. 
If the random walker currently is in a
state $S$ that has a total of $M$ distinct states that can be reached from $S$ in one step, then the random walker's next step 
will go from $S$ to each of these $M$ states with probability $\nicefrac{1}{M}$. 
Among the $3^n$ states of the Tower of Hanoi with $n$ disks
$3^n - 3$ have $M=3$; the other 3, namely the corner nodes $A(n)$, $B(n)$, $C(n)$, have $M=2$.

Building on a monograph by P. G. Doyle and J. L. Snell~\cite{DoyleSnell84}, A. K. Chandra et al.~\cite{Chandraetal89} proved the 
following theorem:

\begin{Th}[The Mean Commute Theorem]\label{Commute}
The expected number of steps in a cyclic random walk on an undirected graph that starts from any vertex $V$,
visits vertex $W$, and then returns to $V$ equals $2mR_{\rm VW}$, where $m$ is the number of edges in the graph and 
$R_{\rm VW}$ is the electrical resistance between nodes $V$ and $W$ when a 1-ohm resistor is inserted in every
edge of the graph.
\end{Th}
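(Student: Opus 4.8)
The plan is to identify the commute time $C_{\rm VW}$ --- the sum of the expected hitting times $H(V,W)$ and $H(W,V)$, where $H(x,y)$ denotes the expected number of steps for a random walk started at $x$ to first reach $y$ --- with a voltage difference in a cleverly driven resistor network, and then to read off $2mR_{\rm VW}$ from Ohm's law. Writing $d(x)$ for the degree of a vertex $x$ and $x\sim y$ when $xy$ is an edge, the first step I would take is to record the one-step recurrence for hitting times. Conditioning on the first move, for every $x\neq W$,
$$H(x,W) = 1 + \frac{1}{d(x)}\sum_{y\sim x} H(y,W), \qquad H(W,W)=0.$$
The pivotal remark is that this is exactly the system of nodal (Kirchhoff) equations of the network in which a $1$-ohm resistor is placed on every edge, provided the external currents are chosen correctly.

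Next I would set up that network. Inject $d(x)$ units of current into every node $x$ simultaneously and withdraw the total $\sum_x d(x)=2m$ units at $W$; let $\phi_W(\cdot)$ be the resulting potentials, normalized by $\phi_W(W)=0$. Since each edge carries unit resistance, the net current leaving $x$ equals $\sum_{y\sim x}(\phi_W(x)-\phi_W(y))$, and Kirchhoff's law equates this to the injected current $d(x)$ at each $x\neq W$; rearranging reproduces the hitting-time recurrence verbatim. As a discrete Dirichlet problem on a connected graph this system has a unique solution, so $\phi_W(x)=H(x,W)$ for all $x$. Defining $\phi_V$ symmetrically --- injecting $d(x)$ everywhere, withdrawing $2m$ at $V$, normalized by $\phi_V(V)=0$ --- gives $\phi_V(x)=H(x,V)$.

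The final step is superposition. The potential $\psi=\phi_W-\phi_V$ solves the nodal equations for the difference of the two current patterns: the $d(x)$ injections cancel at every node, leaving a net injection of $2m$ at $V$ and withdrawal of $2m$ at $W$. This is precisely the flow defining the effective resistance, so Ohm's law yields
$$\psi(V)-\psi(W) = 2m\,R_{\rm VW}.$$
On the other hand, substituting the normalizations $\phi_V(V)=0$ and $\phi_W(W)=0$ collapses the left-hand side,
$$\psi(V)-\psi(W) = \phi_W(V)+\phi_V(W) = H(V,W)+H(W,V) = C_{\rm VW},$$
which establishes $C_{\rm VW}=2mR_{\rm VW}$.

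I expect the crux to be the very first identification: realizing that the correct external drive is the degree sequence $d(x)$ rather than unit injections. This is the only choice that makes the Kirchhoff equations coincide with the hitting-time recurrence, and it is simultaneously what forces the total withdrawn current to equal $2m$ and thereby produces the factor $2m$ in the final answer. Once that insight is in place, uniqueness for the Dirichlet problem and the superposition/Ohm's-law computation are entirely routine.
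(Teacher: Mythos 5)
Your proof is correct. Note that the paper itself contains no proof of this theorem: it is quoted as a known result of Chandra et al.\ (building on the Doyle--Snell monograph), so there is no internal argument to compare against. What you have written---drive the unit-resistance network by injecting $d(x)$ units of current at every node $x$ and withdrawing $2m$ at the target, identify the resulting potentials with hitting times $H(\cdot,W)$ via the one-step recurrence and uniqueness of the discrete Dirichlet problem, then superpose the two drives so the degree injections cancel and Ohm's law gives $H(V,W)+H(W,V)=2mR_{\rm VW}$---is exactly the standard argument from that cited reference, so you have reconstructed the proof the paper relies on, and all steps (the sign conventions, the cancellation in the superposition, and the collapse of $\psi(V)-\psi(W)$ to the commute time) check out.
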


Fig.~\ref{Fig1}a shows the graph for the 1-disk Tower of Hanoi with a 1-ohm resistor inserted in each of its three edges. There are two parallel paths between
states $A$ and $C$, which are respectively the initial and final states for the Puzzle~$1\to 3$ with $n=1$.
The direct path along edge $\overline {\rm AC}$ has a resistance of 1 ohm, and the indirect path along edge $\overline{\rm AB}$ 
followed by edge $\overline{\rm BC}$ has a resistance of 2 ohms.  
The overall resistance from $A$ to $C$ therefore is $\nicefrac{1\cdot 2}{(1 + 2)} = \nicefrac{2}{3}$ ohm. 
Since there are 3 edges in the graph, the mean commute time from $A$ to $C$ and back is $2 \cdot 3 \cdot \nicefrac{2}{3} = 4$.  
By symmetry,\footnote{Full symmetry is required to justify equal mean lengths of the outbound and return segments of a commute. 
E.g., a 3-vertex graph with only two edges, $\overline{FG}$ and $\overline{GH}$ has full symmetry from $F$ to $H$ and back to $F$, 
but not from $F$ to $G$ and back to $F$. Simple calculations give $E_{FH}=E_{HF}=4$, but $E_{FG}=1$ whereas $E_{GF}=3$.}
on average half of the time is spent going from $A$ to $C$ and the other half returning from $C$ to $A$.  
Accordingly, the mean time it takes a randomly moving Tower of Hanoi with $n=1$ to reach peg 3 starting from 
the first peg is $\nicefrac{4}{2} = 2$, which agrees with formula \eqref{FN2} for $E_{1\to 3}(n)$ when $n=1$.

\begin{figure}
\centering \begin{tabular}{lcl}
a) & ~\qquad\qquad~ & b) \\
\includegraphics[height=.3\textwidth]{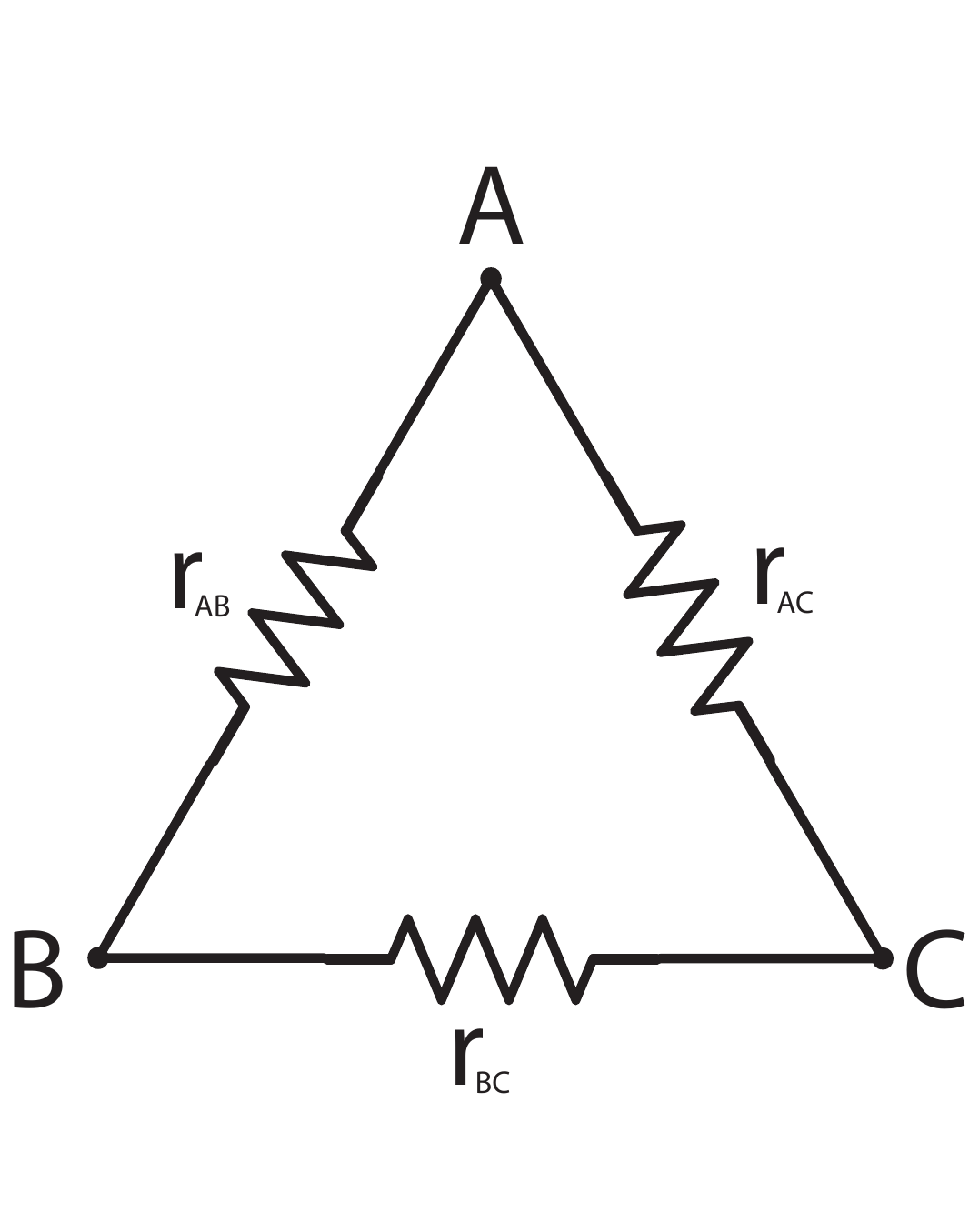} 
& ~\qquad\qquad~ &
\includegraphics[height=.3\textwidth]{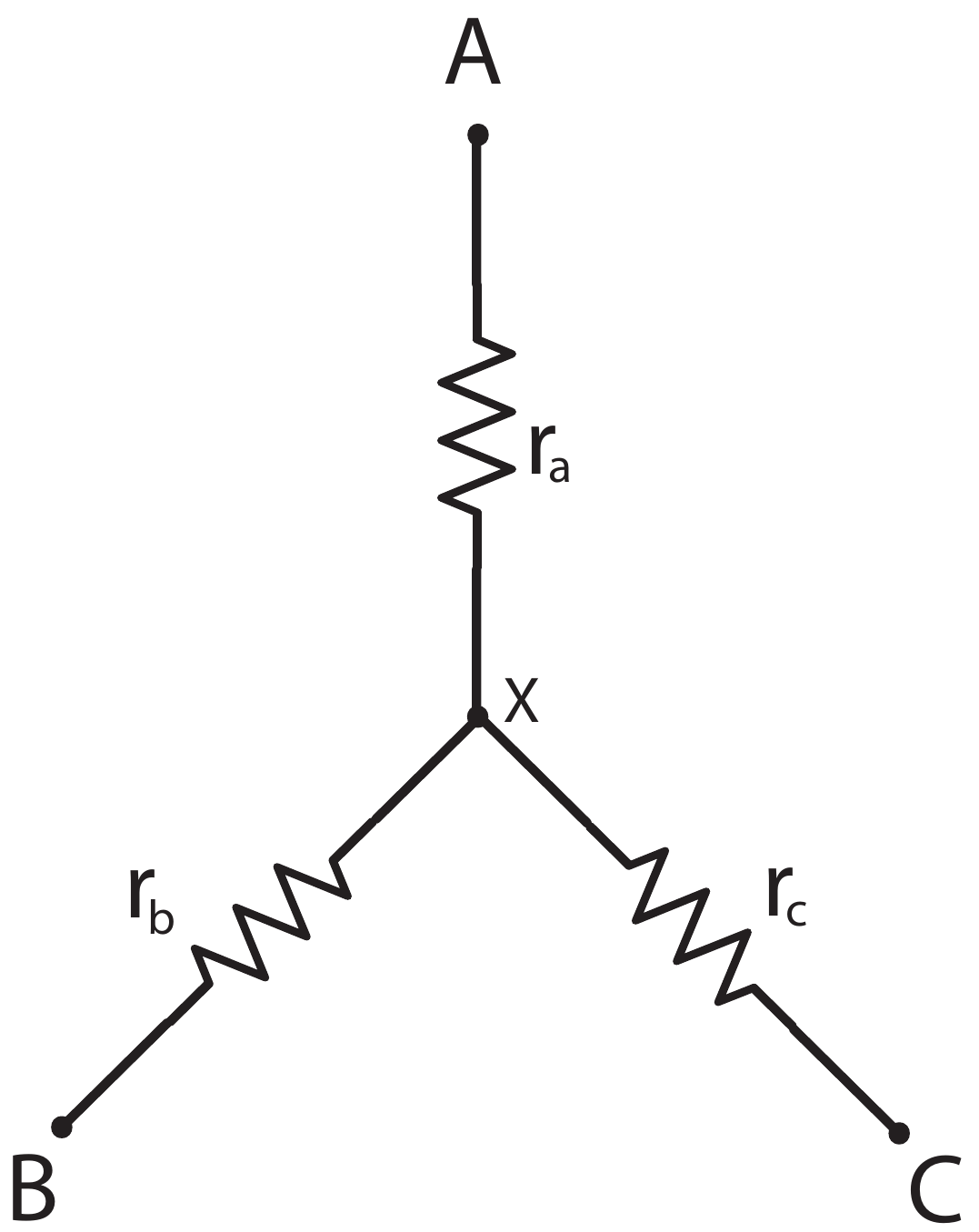}
\end{tabular}
\caption{\textbf{a)} 3-Resistor Delta; \textbf{b)} 3-Resistor Wye.}
\label{Fig1}
\end{figure}

We proceed to iterate this approach in order to derive formula \eqref{FN2} for general $n$.  The key to performing the requisite iterations is
the classical delta-to-wye transformation of electrical network theory \cite{Kennelly1899}.  
A ``delta" is a triangle with with vertices $A$, $B$, and $C$ that 
has resistances $r_{\rm AB}$ in edge $\overline {\rm AB}$, 
$r_{\rm AC}$ in edge $\overline {\rm AC}$, and $r_{\rm BC}$ in edge $\overline {\rm BC}$ (Fig.~\ref{Fig1}a).
The corresponding ``wye" (Fig.~\ref{Fig1}b) has the same three
nodes $A$, $B$, and $C$ plus a fourth node $x$ and three edges $\overline {\rm Ax},
\overline {\rm Bx}$, and $\overline {\rm Cx}$ that contain respective resistances $r_a, r_b$ and $r_c$.  
It is straightforward to verify that, if
\begin{equation} 
r_a = \tfrac {r_{\rm AB}r_{\rm AC}} {r_{\rm AB} + r_{\rm AC} + r_{\rm BC}},\quad 
r_b = \tfrac {r_{\rm AB}r_{\rm BC}} {r_{\rm AB} + r_{\rm AC} + r_{\rm BC}},\quad{\rm and}\quad
r_c = \tfrac {r_{\rm AC}r_{\rm BC}} {r_{\rm AB} + r_{\rm AC} + r_{\rm BC}}, 
\end{equation}
then the net resistance $R_{\rm AB}$ between nodes $A$ and $B$ will be the same in Fig.~\ref{Fig1}b as it is in Fig.~\ref{Fig1}a, and likewise for the 
net resistances $R_{\rm AC}$ between nodes $A$ and $C$ and $R_{\rm BC}$ between nodes $B$ and $C$.

We shall need to consider only the special case $r_{\rm AB} = r_{\rm AC} = r_{\rm BC} = R$, 
in which $r_a =r_b =r_c = \nicefrac{R}{3}$.  In particular, when $R=1$, we have $r_a = r_c = \nicefrac{1}{3}$, 
so Fig.~\ref{Fig1}b yields $R_{\rm AC} = \nicefrac{2}{3}$,
the same result we obtained before by considering the two parallel paths from $A$ to $C$ in Fig.~\ref{Fig1}a.

\begin{Th}[Delta-to-Wye Induction] \label{Wye Induction} 
The state diagram for the $n$-disk Tower of Hanoi
with a unit resistance in each of its branches can be converted, for purposes of determining the resistance between any two 
of its three corner nodes $A(n)$, $B(n)$ and $C(n)$, into a
simple ``wye" in which $A(n)$, $B(n)$ and $C(n)$ each are connected to a center point by links that each contain a common amount 
of resistance denoted by $R(n)$.
\end{Th}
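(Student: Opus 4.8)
The plan is to prove the statement by induction on $n$, exploiting the self-similar construction of the Sierpinski gasket together with the delta-to-wye transformation. For the base case $n=1$ the state diagram is a single delta with three unit resistors, which the computation preceding the theorem already converts into a wye whose three legs each have resistance $R(1)=\nicefrac13$. So I would take as the inductive hypothesis that the $(n-1)$-disk diagram, regarded as a three-terminal network on its corners $A(n-1),B(n-1),C(n-1)$, is equivalent to a wye with a common leg resistance $R(n-1)$.

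For the inductive step I would first recall that the $n$-disk diagram is assembled from three replicas of the $(n-1)$-disk diagram linked by three bridging unit-resistance edges, and that exactly one corner of each replica is an outer corner $A(n),B(n),C(n)$ while the remaining six corners are joined in three pairs by the bridges. By the substitution principle for resistive networks --- replacing a sub-network by any network with the same terminal behaviour leaves all resistances among the outer nodes unchanged --- I replace each replica by its inductive wye. This yields a network with three center nodes $x_1,x_2,x_3$ (one per replica), each center joined to its outer corner and to its two inner corners by legs of resistance $R(n-1)$, and with the three bridges joining the inner corners in pairs.

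The reduction then proceeds in two moves. Along the path from $x_i$ through an inner corner, across a bridge, through another inner corner to $x_j$, the series resistance is $2R(n-1)+1$; doing this for each pair produces a delta on $\{x_1,x_2,x_3\}$ with equal edge resistances $2R(n-1)+1$. Applying the balanced delta-to-wye transformation introduces a single new center $x_0$ joined to each $x_i$ by $\tfrac{2R(n-1)+1}{3}$, after which each $x_i$ has degree two and can be eliminated by combining its outer leg in series, giving a wye centered at $x_0$ whose three legs each equal
$$
R(n)=R(n-1)+\frac{2R(n-1)+1}{3}=\frac{5R(n-1)+1}{3}.
$$
This exhibits the required wye and, as a bonus, the recurrence for $R(n)$.

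Finally, I would note why the three legs must come out equal: the peg-relabelling symmetry of the Tower of Hanoi induces an automorphism group acting as the full symmetric group $S_3$ on $\{A(n),B(n),C(n)\}$, so the equivalent three-terminal network is invariant under every permutation of its terminals and is therefore a balanced wye regardless of the details of the reduction. I expect the main obstacle to be organizational rather than analytic: one must track precisely which inner corners the bridges connect so that the three center-to-center series paths are identical, and one must justify the substitution step so that reducing each replica in isolation is legitimate. Once the interconnection pattern is pinned down, the delta-to-wye and series reductions are routine and the symmetry argument guarantees the common value $R(n)$.
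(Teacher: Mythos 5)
Your proposal is correct and follows essentially the same route as the paper's proof: induction on $n$, replacing each of the three replicas by its inductive wye, combining the bridge paths in series to form a balanced delta on the three wye centers, and applying the delta-to-wye transformation plus series reduction to obtain $R(n)=\frac{5R(n-1)+1}{3}$. Your added remarks on the substitution principle and the $S_3$ symmetry are sound refinements but do not change the substance of the argument.
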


\begin{proof}
We prove Theorem~\ref{Wye Induction} by induction on $n$.  We have already shown that it is true for $n=1$,
the value of $R(1)$ being $\nicefrac{1}{3}$ ohm.  We now show that if Theorem~\ref{Wye Induction}'s statement is true for some positive
integer $n$, then it must also be true for $n+1$. 

\begin{figure}[!t]
\begin{tabular}{lcl}
a) & ~ & b)\\
\includegraphics[height=.28\textheight]{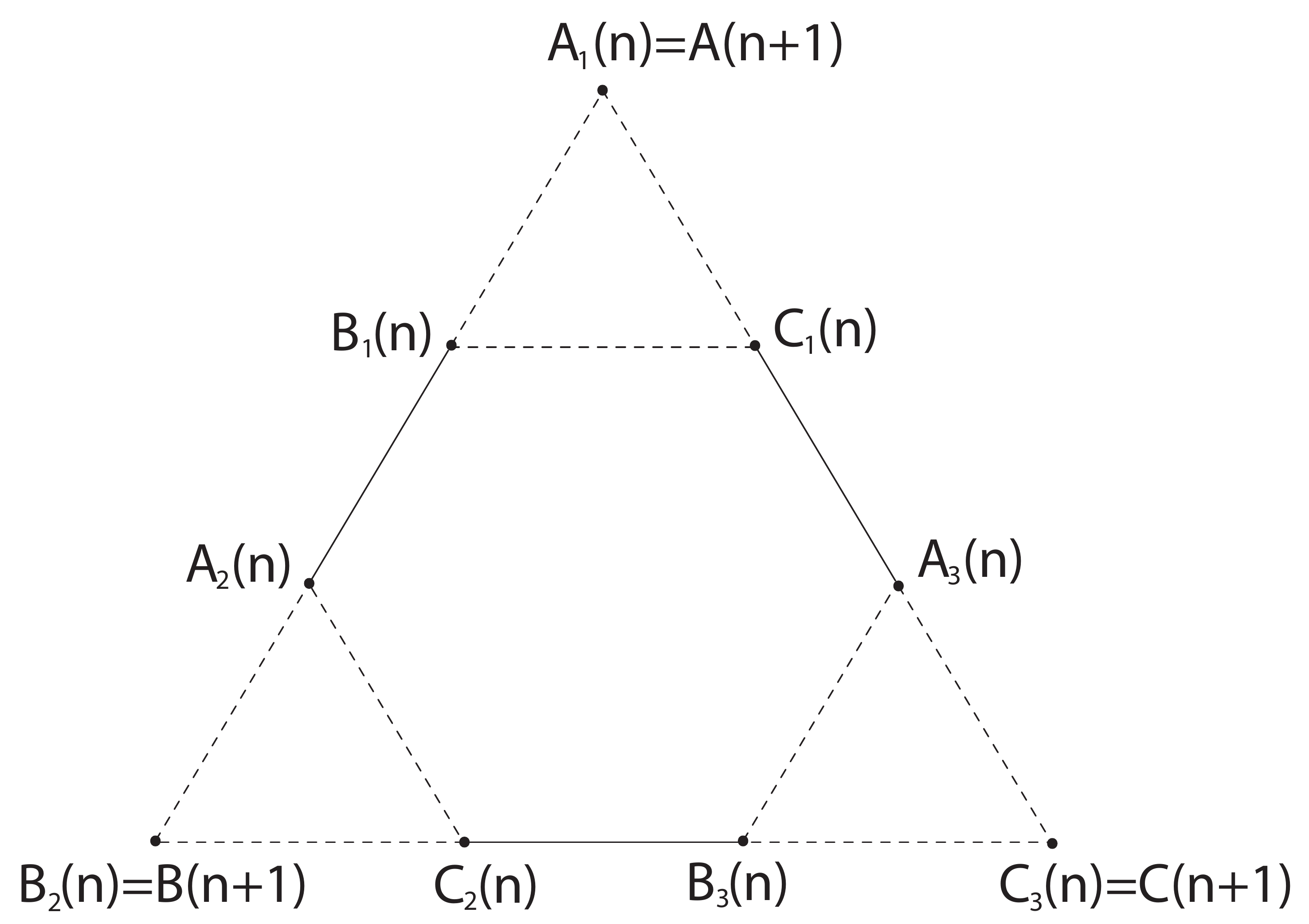} 
& ~ &
\includegraphics[height=.28\textheight]{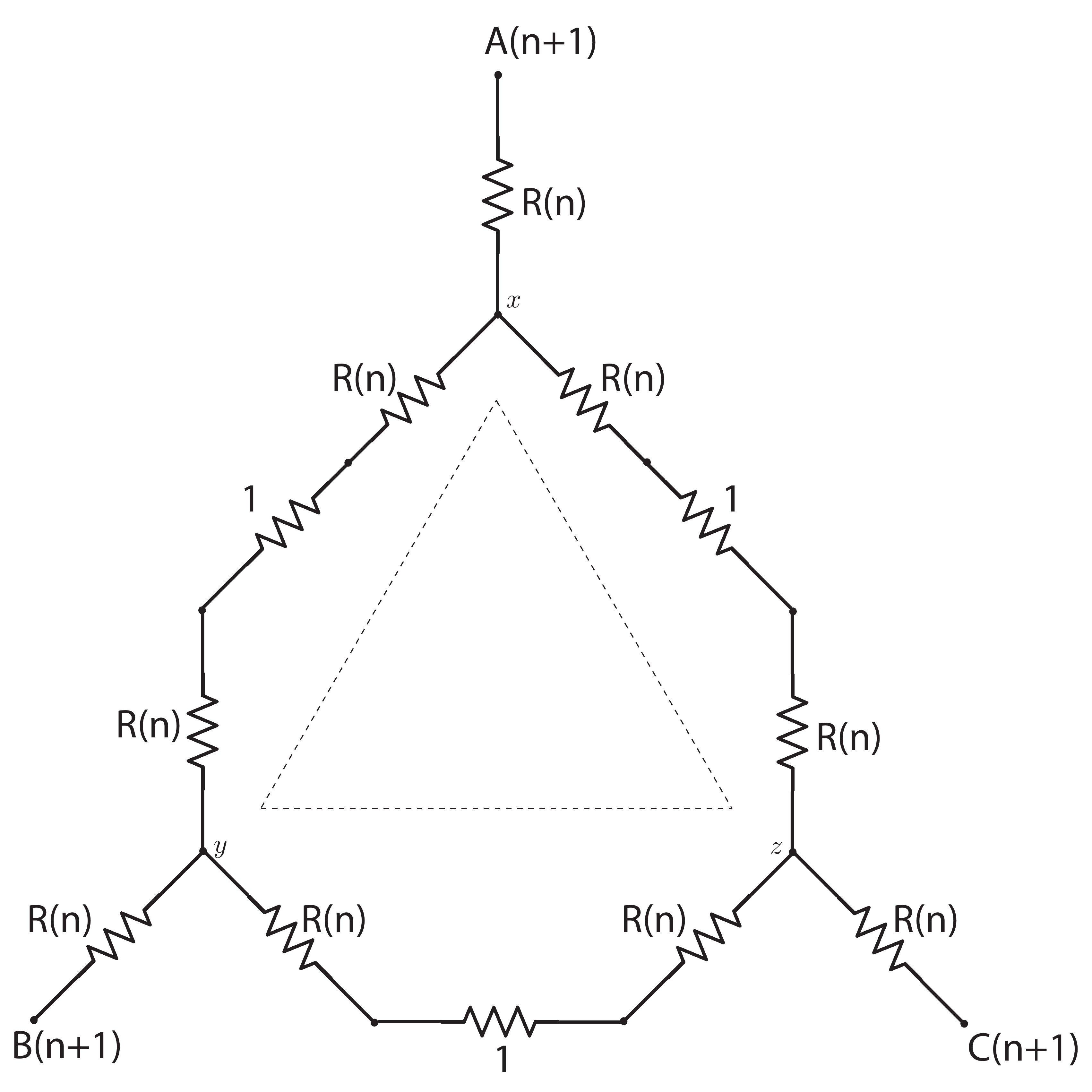} 
\end{tabular}
\caption{\textbf{a)} State transition diagram and \textbf{b)} the corresponding network of resistors for the Tower of Hanoi with $n+1$ disks.}
\label{Fig34}
\end{figure}

As noted earlier, the state transition diagram of the Tower of Hanoi with $n+1$ disks is produced by generating three replicas of that for the Tower of Hanoi
with $n$ disks that possess respective corner nodes $\{A_1(n),B_1(n),C_1(n)\}$, $\{A_2(n),B_2(n),C_2(n)\}$ and $\{A_3(n),B_3(n),C_3(n)\}$ and then
adding three bridging links: one between $B_1(n)$ and $A_2(n)$, another between $C_1(n)$ and $A_3(n)$, 
and the third between $C_2(n)$ and $B_3(n)$ (Fig.~\ref{Fig34}a). The corner nodes in the resulting graph are $A_1(n)=A(n+1)$, $B_2(n)=B(n+1)$ and 
$C_3(n)=C(n+1)$, the only three nodes in the replicas to which none of the bridging links is incident.
When applying Theorem~\ref{Commute}, a unit resistance also must be inserted in each of the three bridging links,
just as is the case for every other link in the graph.  By the induction hypothesis,
the resistance between any two nodes in $\{A_1(n),B_1(n),C_1(n\}$ can be computed using a wye comprised of links from each of them to a center point, call it $x$, 
each of these links having resistance $R(n)$.
The same is true for any two nodes in $\{A_2(n),B_2(n),C_2(n)\}$ and any two in $\{A_3(n), B_3(n), C_3(n)\}$, with the respective center points called $y$ and $z$.
Doing these three delta-to-wye conversions results in Fig.~\ref{Fig34}b.
Note that triangle $\overline{xyz}$ in this figure is a delta, each edge of which has resistance $R(n) + 1 + R(n)$ = $2R(n) +1$.  
The delta-to-wye transformation applied to this delta network results in the wye network of Fig.~\ref{Fig5}.  Each of the links in this wye has
the same resistance, namely
\begin{equation} \label{R(n+1)}
R(n+1) = R(n) + \frac{2R(n) + 1}3  =  \frac{5R(n)+1}{3}.
\end{equation}
Theorem~\ref{Wye Induction} is proved.
\end{proof}

\begin{figure}[!t]
\centering \begin{tabular}{lcl}
a) & ~\qquad~ & b)\\
\includegraphics[height=.3\textheight]{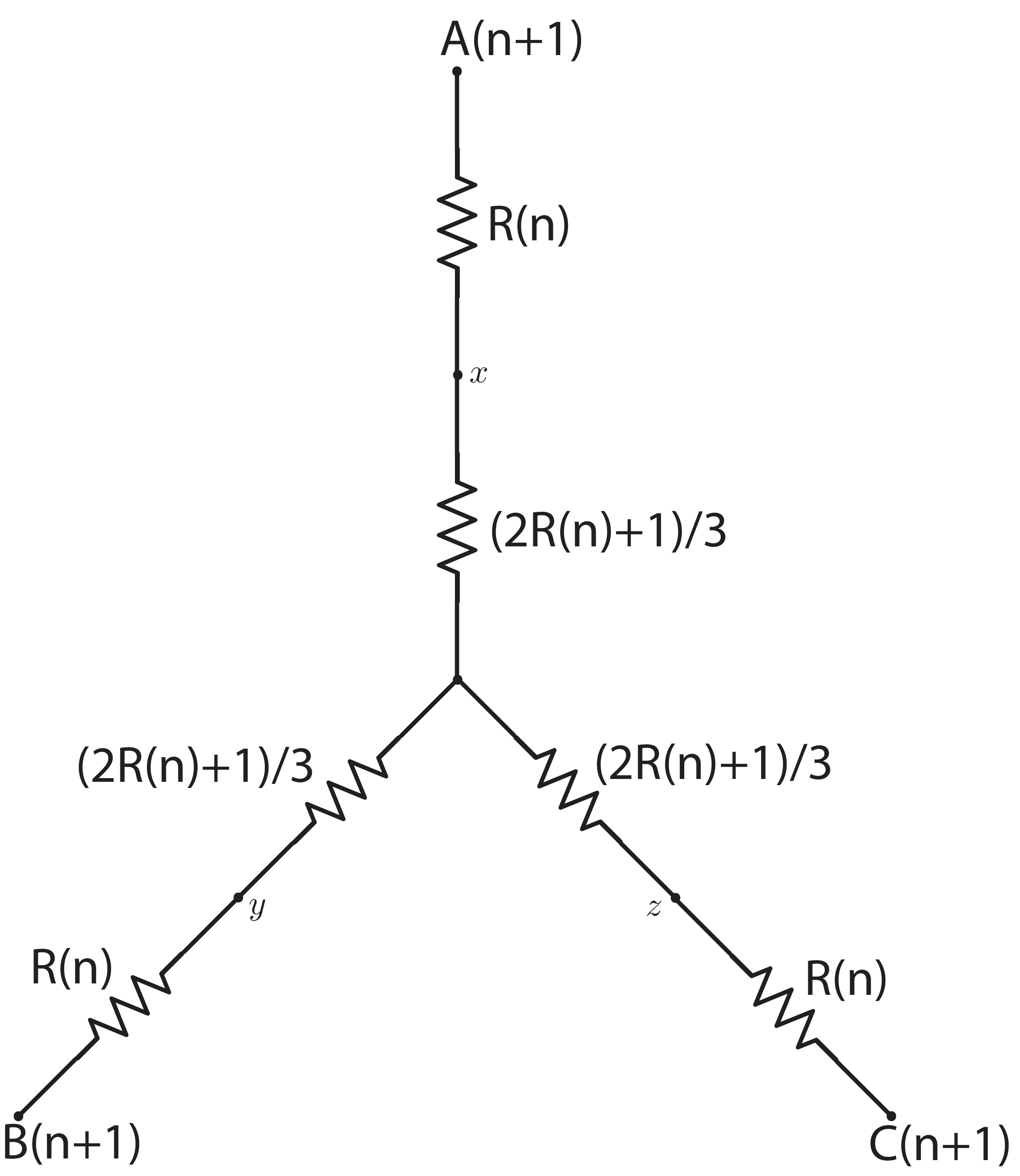}
& ~\qquad~ &
\includegraphics[height=.3\textheight]{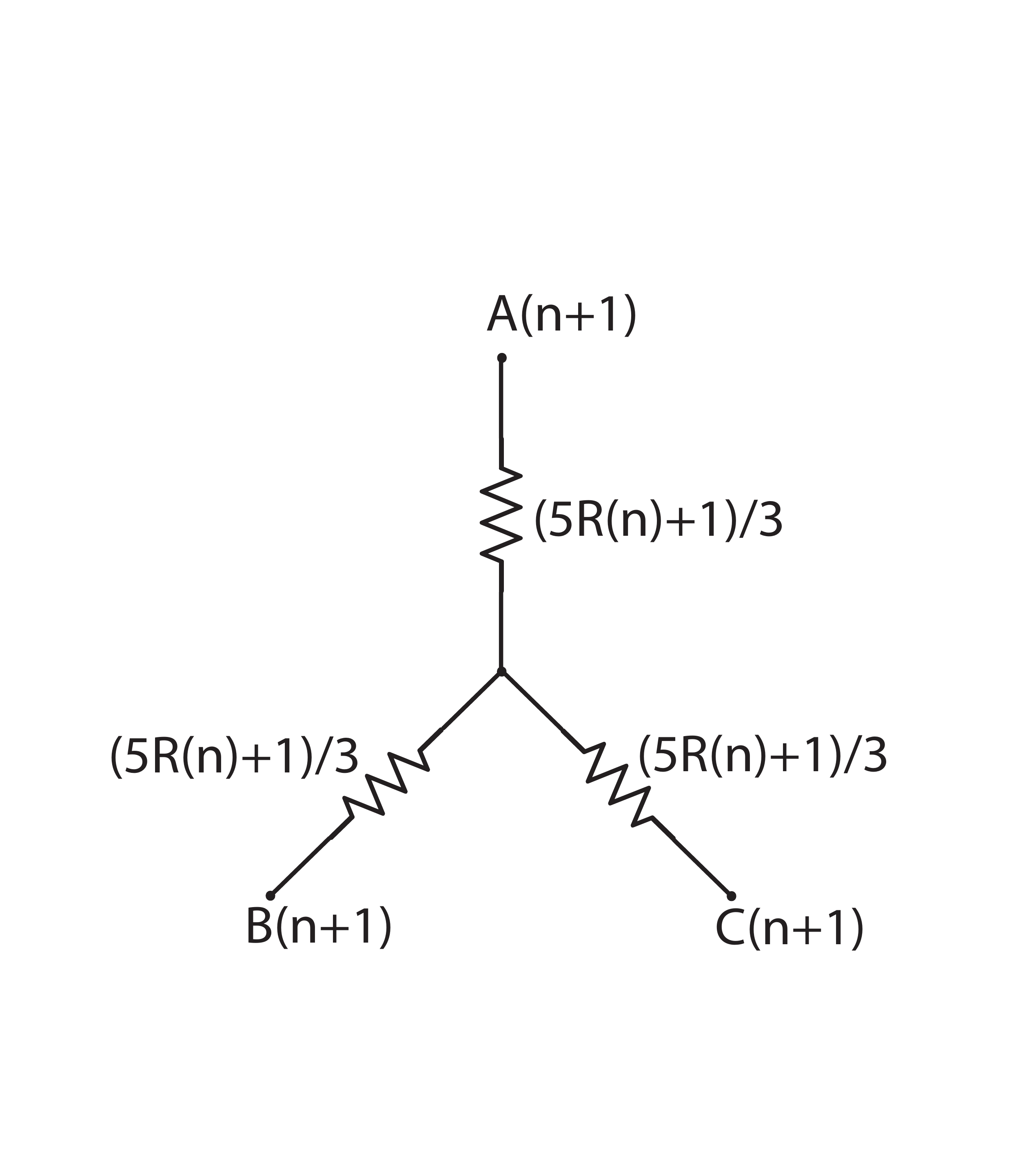}
\end{tabular}
\caption{\textbf{a)} A single wye of the state transition diagram of the Tower of Hanoi with $n+1$ disks and \textbf{b)} its reduction
to a simpler wye with resistance $\nicefrac{(5R(n)+1)}{3}$ in each link.}
\label{Fig5}
\end{figure}

From equation \eqref{R(n+1)} and the boundary condition $R(1) = \nicefrac{1}{3}$, we obtain the key result:
\begin{equation} \label{R_n}
R(n) = \frac{5^n-3^n}{2 \cdot 3^n}.
\end{equation}
The number $m_n$ of edges in the state transition diagram for the $n$-disk Tower of Hanoi is
\[ 
m_n = \frac{(3^n -3)\cdot 3 + 3\cdot 2}{2} = \frac{3}{2}(3^n -1).
\]
From the Mean Commute Time theorem and the symmetry of random walks from $A$ to $C$
and from $C$ to $A$, it follows that the mean number of steps it takes a randomly 
moving $n$-disk Tower of Hanoi to transfer all its disks from the first peg to the third peg equals
\begin{equation} \label{AlekseyevFormula}
m_n \cdot R_{AC}(n) = m_n \cdot 2R(n) = \frac{(3^n - 1)(5^n - 3^n)}{2 \cdot 3^{n-1}},
\end{equation}
which agrees with formula \eqref{FN2} for the mean number $E_{1\to 3}(n)$ of moves in Puzzle~$1\to 3$.

\section{Discussion}

The minimum number of moves required to solve the Tower of Hanoi with $n=64$ disks is ``only" $2^{64} - 1 = 18,446,744,073,709,551,615$.
Since it is often asserted that monks possess superhuman abilities, maybe they can move disks rapidly.  
Perhaps they can make a move a microsecond, maybe even a move a nanosecond, and planet Earth may expire any day now. 
This in part motivated adoption of a randomly moving Tower of Hanoi~\cite{Berger08}.

Formula~\eqref{AlekseyevFormula} shows that replacing the minimum-moves strategy with a random walk forestalls the end of the world by a factor of roughly 
$\left(\tfrac{5}{2}\right)^{64} > 2.9 \times 10^{25}$ on average.  Although this is reassuring, it nonetheless would be further comforting to know that 
the coefficient of variation of the random number of steps in Puzzle~$1\to 3$ with $n=64$ disks is small,
i.e., that its standard deviation is many times smaller than its mean.  
Exact determination of said coefficient of variation is an open problem that we may address in future research.  

An extensive bibliography of some 370 mathematical articles concerning the Tower of Hanoi puzzle and variations 
thereon has been complied by Paul Stockmeyer~\cite{Stockmeyer05}. 
While the current paper was under review, our attention was drawn to the work \cite{Wu2011}, which develops similar ideas of analyzing 
random walks in Sierpinski gaskets via resistor networks. 
Properties the generalized Tower of Hanoi with more than three pegs and its state transition diagrams are studied to some extent in~\cite{Hinz2013}.

\section*{Acknowledgments}
The authors are indebted to Neil J. A. Sloane, creator and caretaker of the Online Encyclopedia of Integer Sequences (OEIS)~\cite{OEIS}.
Neil's interest in the research reported herein and the existence of his OEIS connected us to one another and to the broader Tower of Hanoi research community.
The authors are also thankful to Sergey Aganezov and Jie Xing for their help with preparation of the figures.

The first author was supported by the National Science Foundation under grant No. IIS-1253614.

\bibliographystyle{plain}
\bibliography{hanoi.bib}

\end{document}